\documentclass[10pt,a4paper]{article}
\usepackage{setspace}

\usepackage{amsfonts,amsmath,amssymb}
\usepackage{graphicx}
\usepackage{sectsty}
\usepackage{fullpage}
\newtheorem{theorem}{Theorem}[section]
\newtheorem{lemma}[theorem]{Lemma}

\newtheorem{proposition}[theorem]{Proposition}

\newenvironment{proof}[1][Proof]{\begin{trivlist}
\item[\hskip \labelsep {\bfseries #1}]}{\end{trivlist}}
\numberwithin{equation}{section}

\sectionfont{\large}


\begin{document}
\title{A Super-Exponential Decaying Property of Odd-Dimensional Wave Scattered by an Obstacle}
\author{Lung-Hui Chen$^1$}\maketitle\footnotetext[1]{Department of
Mathematics, National Chung Cheng University, 168 University Rd.,
Min-Hsiung, Chia-Yi County 621, Taiwan. Email:
lhchen@math.ccu.edu.tw; mr.lunghuichen@gmail.com. Fax: 886-5-2720497. The author is supported by NSC Grant
97-2115-M194-010-MY2.}
\begin{abstract}
We examine an inverse backscattering property of wave motion
imposed by an obstacle. We show that if the wave propagator
 decays super-exponentially along the back-scattered geodesics, then the involved scatterer
must be trivial. In particular, if the fundamental solution decays
super-exponentially some time after $t=0$, it vanishes for all
time. We use finite speed of propagation in this article.
\end{abstract}
\section{Introduction and main results}
Let $\Gamma$ be an embedded hypersurface in $\mathbb{R}^n$ with odd $n\geq3$ such that
\begin{equation}
\mathbb{R}^n\setminus \Gamma=\Omega\cup \mathcal{O},\mbox{ with
}\overline{\mathcal{O}}\mbox{ compact and }\overline{\Omega}\mbox{
connected },
\end{equation}where both $\mathcal{O}$ and $\Omega$ are open. We
denote $\mathcal{O}$ as an obstacle and $\Omega$ as its exterior.

\par
We consider the following exterior problem. Let $u\in
\mathcal{H}^2(\Omega)$ be the solution of
\begin{eqnarray}\label{1.2}
    -\Delta u+qu=0 \mbox{ in }\Omega
\end{eqnarray}
with the boundary operator $\gamma$ on $\Gamma$ given either by
\begin{equation}\label{1.3}
\gamma u=(\partial/\partial\nu)u\mbox{ or }
\gamma u=u.
\end{equation}
We assume that $q$ is real-valued function uniformly H\"{o}lder
continuous on $\Omega\cup\Gamma$ with a compact support. Let us
use $H$ be the self-adjoint operator in $\mathcal{L}^2(\Omega)$
given by $-\Delta+q$ acting on the functions $g\in
\mathcal{H}^2(\Omega)$ with boundary condition $\gamma g=0$; $H_0$
be the self-adjoint operator in $\mathcal{L}^2(\mathbb{R}^n)$
given by $-\Delta$ acting on $\mathcal{H}^2(\mathbb{R}^n)$. We
refer to Shenk and Thoe \cite{Shenk,Shenk2,Shenk3,Shenk4} for a
review on a spectral theory and scattering theory as in the
setting~(\ref{1.2}) and~(\ref{1.3}).

\par
In this paper, we use $A(x,D_x)):=H$ outside $\mathcal{O}$ satisfying
boundary condition~(\ref{1.3}) to review Irvii's theory
\cite{Ivrii,Ivrii2}. Let us investigate the relation of the
following three PDEs in this paper. Let $u(x,y,t)$ be the solution
of
\begin{equation}
\left\{%
\begin{array}{ll}
    P(x,D_x,D_t)u(x,y,t):=(D^2_t-A(x,D_x))u(x,y,t)=0;\\
    u(x,y,t)|_{t=0}=0,\hspace{2pt}  u_t(x,y,t)|_{t=0}=\delta(x-y).
    \label{u}
\end{array}%
\right.
\end{equation}
with boundary condition~(\ref{1.3}); let $u_0(x,y,t)$ be the
solution of
\begin{equation}
\left\{%
\begin{array}{ll}
    P(x,D_x,D_t)u_0(x,y,t)=0;\\
    u_0(x,y,t)|_{t=0}=0,\hspace{2pt}
    u_{0t}(x,y,t)|_{t=0}=\delta(x-y);
    \label{u0}
\end{array}%
\right.
\end{equation}
let $u_1(x,y,t)$ be the solution of
\begin{equation}
\left\{%
\begin{array}{ll}
    P(x,D_x,D_t)u_1(x,y,t)=0;\\
    u_1(x,y,t)|_{t=0}=u_{1t}(x,y,t)|_{t=0}=0,\hspace{2pt}B_{\pm}u_1(x,y,t)=-B_{\pm}u_0(x,y,t),
    \label{u1}
\end{array}%
\right.
\end{equation}
where $B_{-}v=v|_\Gamma$ and $B_+v=\frac{\partial}{\partial
n}v|_\Gamma$. In operator form, $u(x,y,t)$ is the Schwartz kernel of
$\frac{\sin t \sqrt{H}}{\sqrt{H}}$. We see that
$u_1(x,y,t)=u(x,y,t)-u_0(x,y,t)$.

\par
We have to remind ourselves of the relation of~(\ref{u0})
and~(\ref{u1}) to the following Cauchy problem for the wave
equation: let $U(x,y,t)$ be the solution of
\begin{equation}\label{U}
\left\{%
\begin{array}{ll}
    P(x,D_x,D_t)U(x,y,t):=(D^2_t-A(x,D_x))U(x,y,t)=0;\\
    U(x,y,t)|_{t=0}=\delta(x-y),\hspace{2pt}
    U_{t}(x,y,t)|_{t=0}=0,

\end{array}%
\right.
\end{equation}
with either boundary condition~(\ref{1.3}). Let
$U_0(x,y,t)$ be the solution of
\begin{equation}\label{U_0}
\left\{%
\begin{array}{ll}
    P(x,D_x,D_t)U_0(x,y,t)=0;\\
    U_0(x,y,t)|_{t=0}=\delta(x-y),\hspace{2pt}
    U_{0t}(x,y,t)|_{t=0}=0.

\end{array}%
\right.
\end{equation}
We define
\begin{equation}
U_1(t,x,y):=U(t,x,y)-U_0(t,x,y).\label{U_1}
\end{equation}
Formally, we write $U(t)$ or $\cos t\sqrt{H}$ as the solution
operator. In terms of Fourier transform, $\cos t\sqrt{H}$ and
$\frac{\sin t \sqrt{H}}{\sqrt{H}}$ differs by $|\lambda|$.
$\lambda$ is the frequency variable. In short time, the wave trace
is supported in a region containing $\Omega$. In that case, the
support can be embedded into a torus. We remove the set
$\mathcal{O}$ from this torus. That is the manifold the analysis
in this paper is carried out. See section 2 for a picture.

\par
In S\'{a} Barreto and Zworski \cite{SaBarreto and
Zworski,SaBarreto and Zworski2}, they try to answer the following
question: let $E(t,x,y)$ be the fundamental solution of the
following perturbed wave equation
\begin{eqnarray}\left\{%
\begin{array}{ll}
    (D^2_t-P)E(t,x,y)=0; &  \\
    E(0,x,y)=0; &  \\
    \partial_tE(0,x,y)=\delta(x-y), &  \\
\end{array}%
\right.\label{schrodinger}
\end{eqnarray}
where $P$ is an elliptic self-adjoint operator in
$L^2(\mathbb{R}^n,\sqrt{\bar{g}})$ defined as
\begin{eqnarray}
P:=-\frac{1}{\sqrt{\bar{g}}}\sum_{i,j=1}^n\partial_{x_i}\sqrt{\bar{g}}g^{i,j}\partial_{x_j}+V,
\hspace{2pt}\bar{g}=\det(g_{ij}),\hspace{2pt}(g^{ij})=(g_{ij})^{-1},
\end{eqnarray}where $V$, $g_{ij}$ are among smooth functions with
bounded derivatives, $\mathcal{C}^\infty_b(\mathbb{R}^n)$, such
that $V(x)$ and $|g_{ij}-\delta_{ij}|$ decays super-exponentially.
Does a super-exponentially decaying fundamental solution
$E(t,x,y)$ of~(\ref{schrodinger}) inside its characteristic cone
implies the solution $E(t,x,y)$ actually vanishes there? They give
affirmative answer there. Is there a similar property valid for an
obstacle scatterer?

\par
On the other hand, the relation between the wave decaying speed and the location of the
poles from the meromorphic continuation of the Green's function in $\mathbb{C}$ is classical. For
instance, let $u(x,t)$ be the solution of the Cauchy problem:
\begin{eqnarray}
\left\{%
\begin{array}{ll}
    D_t^2u(x,t)+Hu(x,t)=\varphi(x,t); \\
    u(x,0)=f(x); \\
    D_t(x,0)=g(x), \\
\end{array}%
\right.
\end{eqnarray}
where $H:=-\Delta+q$ and the regularity condition on $q,\varphi,f,g$ are specified as in
Thoe \cite{Thoe}. Suppose that $H$ has neither $L^2$-discrete spectrum nor $L^2$-embedded spectrum at
$0$. Then, it is shown in Thoe \cite[sec.3]{Thoe} that the solution
$u(x,t)$ behaves like $O(e^{-\gamma t})$, $\gamma$, as
$t\rightarrow\infty$, in such a fashion that $\gamma$ is any
positive number less than the minimal distance of the poles of its
Green's function from the real axis in $\mathbb{C}$. Is the
statement valid for an obstacle scatterer?
\par
We state the main result in this paper as
\begin{theorem}
Let $U_1(t,x,y)$  be described as in~(\ref{U_1}). If for all $x\in
\mathbb{R}^n$, $n\geq 3$, odd, and for all $N\in\mathbb{N}$ and
for some constants $T,C$ such that for all $|t|>T>0$,
\begin{equation}\label{1.13}
|U_1(t,x,x)|\leq Ce^{-N|t|},
\end{equation}then $\mathcal{O}=\phi$.
\end{theorem}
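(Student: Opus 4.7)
The plan is to pass to the frequency side and exploit the dichotomy between the Paley--Wiener-type consequences of the super-exponential decay hypothesis and the meromorphic structure of the scattered resolvent in odd spatial dimensions. First, I record two structural facts used throughout. By finite speed of propagation for $P=D_t^2-A$, the difference $U_1(t,x,y)$ vanishes for $0\leq|t|<2\operatorname{dist}(x,\mathcal{O})$ when $x\in\Omega$; and, because $n$ is odd, the strong Huygens' principle for the free problem gives $U_0(t,x,x)\equiv 0$ away from $t=0$, so on the diagonal $U_1(t,x,x)=U(t,x,x)$ for $t\neq 0$.

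Next, fix $x$ and consider the Fourier transform in time
\begin{equation*}
F_x(\lambda):=\int_{\mathbb{R}} e^{it\lambda}\,U_1(t,x,x)\,dt.
\end{equation*}
Splitting the integral at $|t|=T$ and using the hypothesis $|U_1(t,x,x)|\leq Ce^{-N|t|}$ for every $N$ on $|t|>T$ (together with the known wave-front/regularity control of the diagonal wave kernel on $|t|\leq T$), $F_x$ extends to an entire function of $\lambda\in\mathbb{C}$, with the bound $|F_x(\lambda)|\leq C_N\,e^{T|\operatorname{Im}\lambda|}$ for arbitrarily large $N$. Thus $F_x$ is of order at most $1$ and exponential type at most $T$, and, more importantly, it has \emph{no poles anywhere in $\mathbb{C}$}.

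On the other hand, via the functional calculus for $\cos t\sqrt{H}$ and $\frac{\sin t\sqrt{H}}{\sqrt{H}}$, the function $F_x(\lambda)$ coincides, up to an elementary factor, with the diagonal value at $x$ of the resolvent difference $R(\lambda)-R_0(\lambda)$. In odd dimensions this difference admits a meromorphic continuation from the physical half-plane to the whole $\lambda$-plane, whose poles are the scattering resonances of $H$. The previous step therefore forces the resonance set to be empty, and moreover constrains $F_x$ to be an entire function of small exponential type that decays on the real axis by the limiting absorption principle of Shenk--Thoe. A Phragm\'en--Lindel\"of argument applied in the closed upper and lower half-planes then yields $F_x\equiv 0$, hence $U_1(t,x,x)\equiv 0$ for each $x$.

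Finally, since $U_1(t,x,y)$ is the symmetric Schwartz kernel of $\cos t\sqrt{H}-\cos t\sqrt{H_0}$ and vanishes on the diagonal for all $t$ and all $x$, a polarization / spectral-projection argument (spreading from the diagonal via commutation with $H$) upgrades this to $U_1(t,x,y)\equiv 0$, so $\cos t\sqrt{H}=\cos t\sqrt{H_0}$ as operators. This identity is incompatible with a non-empty compact obstacle $\mathcal{O}$ supporting either Dirichlet or Neumann data on $\Gamma$: for instance, take $y\in\Omega$ with $\operatorname{dist}(y,\Gamma)$ small; the kernel $U_0$ has support reaching into $\mathcal{O}$ for $t>\operatorname{dist}(y,\Gamma)$, while $U$ cannot, so their difference is nontrivial. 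Hence $\mathcal{O}=\emptyset$. The main obstacle in the plan is step three: justifying that the entire extension $F_x$ genuinely vanishes rather than merely having no poles. This is the place where the odd-dimensional algebraic (rather than logarithmic) structure of the free resolvent is essential, and where the exponential-type estimate of exactly order $T$ must be combined with real-axis decay in a quantitatively compatible way.
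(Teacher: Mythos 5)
There is a genuine gap at the heart of the proposal, precisely where you flag it as ``the main obstacle.'' Your plan is to show that the full Fourier transform $F_x(\lambda)=\int e^{i\lambda t}U_1(t,x,x)\,dt$ is entire, has no poles, and ``decays on the real axis by the limiting absorption principle,'' and then to invoke Phragm\'en--Lindel\"of to conclude $F_x\equiv 0$. But the real-axis decay claim is false: $U_1(t,x,x)$ is singular at $t=0$, and by Ivrii's short-time trace asymptotics its Fourier transform \emph{grows} polynomially, $F_x(\lambda)\sim a_1(x)|\lambda|^{n-2}+a_2(x)|\lambda|^{n-3}+\cdots$ as $\lambda\to\pm\infty$ on $\mathbb{R}$, with $\int_\Gamma a_1\,dS$ proportional to $\mathrm{vol}(\Gamma)$. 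So Phragm\'en--Lindel\"of can at best tell you that $F_x$ is a polynomial; it cannot give $F_x\equiv 0$, and indeed $F_x\not\equiv 0$ in general. Moreover, your exponential-type bound is $T$ (fixed, possibly large), not $0$; the Levin-type lemma that converts polynomial growth on $\mathbb{R}$ into ``$f$ is a polynomial'' requires \emph{minimal} type, i.e.\ type $0$, which you never obtain. Consequently the intended conclusion $U_1(t,x,x)\equiv 0$ does not follow, and the subsequent polarization step is built on a false premise.

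The paper avoids both problems by not working with $F_x$ itself. It localizes in time with a cutoff $\rho_1$ of arbitrarily small support near $t=0$ (using the hypothesis to kill the tail and smoothness/decay of the middle piece to discard it modulo a rapidly decreasing term), so that the relevant transform $I_1(\lambda)=\int e^{i\lambda t}U_1(t,x,x)\rho_1(t)\,dt$ is entire of order $1$ and \emph{minimal} type. The Levin lemma then forces $I_1$ to be a polynomial of degree $\le n-2$, hence $t^{n-1}U_1(t,x,x)\equiv 0$ near $t=0$. Plugging in Ivrii's singularity expansion and equating coefficients gives $a_1(x)=0$ for all $x$, and since $\int_\Gamma a_1\,dS_x=\pm\tfrac14(2\pi)^{-n+1}\omega_{n-1}\,\mathrm{vol}(\Gamma)$, this forces $\Gamma=\emptyset$. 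The step your proposal is missing is exactly this: you need to extract and annihilate the leading Ivrii coefficient of the short-time diagonal trace, rather than trying to show the whole Fourier transform vanishes. Your observations about finite propagation speed and odd-dimensional Huygens' principle for $U_0$ are correct but do not substitute for this.
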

Comparing the assumption~(\ref{1.13}) with \cite{SaBarreto and
Zworski,SaBarreto and Zworski2}, the constant $C$ in~(\ref{1.13})
is independent of $x$. In particular, letting $N$ goes to
infinity, we have $U_1(x,x,t)\equiv 0$ for $|t|>T$. In this case,
we will show the scatterer $\mathcal{O}$ is void. We have
$U_1(x,x,t)=U(x,x,t)-U_0(x,x,t)\equiv0 $ by the uniqueness of the
wave equation~(\ref{U_0}). $U_1(x,x,t)$ is entirely zero for all
$t$. Hence, under assumption~(\ref{1.13}), finite speed of
propagation makes our analysis here behaves as a compact case.

\section{Spectral analysis}
We start with this paper with the meromorphic extension theory for
resolvent operator
$$(H-\lambda^2)^{-1}: \mathcal{L}^2(\mathbb{R}^n)\rightarrow \mathcal{L}^2(\mathbb{R}^n).$$ It is well-known in the literature.
We refer to Sj\"{o}strand and Zworski \cite{Sjostrand and
Zworski1} that
\begin{equation}
R(\lambda):=(H-\lambda^2)^{-1}:\mathcal{L}^2(\mathbb{R}^n)\rightarrow
\mathcal{H}^2(\mathbb{R}^n),\hspace{2pt}\Im\lambda>0,\hspace{2pt}\lambda^2\notin\sigma(H),
\end{equation}
meromorphically extends to
\begin{equation}
R(\lambda):\mathcal{L}^2_{comp}(\mathbb{R}^n)\rightarrow\mathcal{H}^2_{loc}(\mathbb{R}^n),\label{2.2}
\end{equation}
where for odd $n$, $\lambda$ is defined over $\mathbb{C}$; for
even $n$, $\lambda$ is defined over $\Lambda$, the logarithmic
plane. We refer Zworski for \cite{Zworski3,Zworski2} for a scattering theory.
Let $\chi_1$, $\chi_2\in\mathcal{C}^\infty_0(\mathbb{R}^n)$ such
that $\chi_2\chi_1=\chi_1$ and $\chi_1$ covers
$\bar{\mathcal{O}}$.~(\ref{2.2}) is equivalent to
\begin{equation}
\chi_2R(\lambda)\chi_1:\mathcal{L}^2(\mathbb{R}^n)\rightarrow
\mathcal{H}^2(\mathbb{R}^n),
\end{equation}as a meromorphic family of operators. Such an
extension theory does not depend on the cutoff functions $\chi_i$,
$i=1,2.$ That means we can shrink the domain of the resolvent and
enlarge the image space to make $R(\lambda)$ defined as a
meromorphic family of operators. Alternatively, we let
$\chi\in\mathcal{C}^\infty(\mathbb{R}^n;[0,1])$ be a local cutoff
function such that $\chi_i$, $i=1,2$, has supports in
supp$(\chi)$. We alternatively say
\begin{equation}
\chi R(\lambda)\chi:\mathcal{L}^2(\mathbb{R}^n)\rightarrow
\mathcal{H}^2(\mathbb{R}^n)\mbox{ meromorphically}.
\end{equation}
From \cite{Sjostrand and Zworski1}, we know $\chi R(\lambda)\chi$
in a black box perturbation can be written as
\begin{equation}\label{2.5}
\chi
R(\lambda)\chi=\chi\{Q_0(\lambda)\chi+Q_1(\lambda_0)\chi\}(I+K(\lambda,\lambda_0))^{-1}.
\end{equation}
Without any possible confusion, we still use $R(\lambda)$ as the
extended cutoff resolvent.

\par
The poles are of finite rank and their
multiplicity
\begin{equation}
m_{\lambda_0}(R):=\mbox{rank}\int_{\partial
D(\lambda_0,\epsilon)}R(\lambda)\lambda d\lambda.
\end{equation}

\par
In addition to lemmas above, we may take the resolvent kernel $R(\lambda,x,y)$ as the
Green's function of the elliptic problem $(H-\lambda^2)u=0$ with
boundary condition~(\ref{1.3}) which is
 well-known in the setting as Shenk and Thoe \cite[Theorem 5.1]{Shenk2}
and Thoe \cite{Thoe}. In particular,
\begin{lemma}\label{2.3}
Let $\Gamma=\Gamma_1\cup\Gamma_2$ where $\Gamma_1$ and $\Gamma_2$
are two disjoint subsets of $\Gamma$. Then the elliptic
problem~(\ref{1.2}) and~(\ref{1.3}) has the Green's function
$G(x,y,\lambda)$ such that $(i)$ $G(x,y,\lambda)$ is an outgoing
solution of
\begin{eqnarray}
\left\{%
\begin{array}{ll}
(-\Delta_x+q(x)-\lambda^2)G(x,y,\lambda)=0,\forall
x\in\Omega\setminus\{y\};\\ (\frac{\partial}{\partial\nu_x}-\sigma(x))G(x,y,\lambda)=0,\forall
x\in\Gamma_1;\\ G(x,y,\lambda)=0,\forall x\in\Gamma_2,
\end{array}%
\right.
\end{eqnarray}
such that
\begin{equation}
u(y)=\int_\Gamma[u(x)\frac{\partial}{\partial\nu_x}G(x,y,\lambda)-G(x,y,\lambda)\frac{\partial}{\partial\nu_x}u(x)]
dS_x+\int_\Omega G(x,y,\lambda)(-\Delta+q(x)-\lambda^2)u(x)dx,
\end{equation}
$\forall y\in\Omega$ and for all outgoing function $u\in
\mathcal{H}^2_{loc}(\overline{\Omega})\cap\mathcal{C}^{2+\alpha}(\Omega)$

$(ii)$ For fixed $y\in\Omega$, the map
\begin{equation}
G(\lambda,x,y):\mathbb{F}\setminus{\rm resonances}
\rightarrow\mathcal{C}^1(\Omega\cup\Gamma)\cap\mathcal{C}^{2+\alpha}(\Omega)\end{equation}
is continuous; for fixed $y\in\Omega$, the map
\begin{equation}G(\lambda,x,y):\mathbb{F}\rightarrow\mathcal{C}^1(\Omega\cup\Gamma)\cap\mathcal{C}^{2+\alpha}(\Omega)\end{equation}
is meromorphic.
\end{lemma}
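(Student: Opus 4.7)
The plan is to identify $G(x,y,\lambda)$ with the Schwartz kernel of the meromorphically extended resolvent described in~(\ref{2.5}) and then to read off both claims from this identification, invoking elliptic regularity to pass from $\mathcal{L}^2/\mathcal{H}^2$ bounds to Hölder statements.

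For part (i), I would set $G(\cdot,y,\lambda)$ to be the Schwartz kernel of $R(\lambda) = (H-\lambda^2)^{-1}$ on the physical half-plane and, by the meromorphic continuation of Section 2, on all of $\mathbb{F}$ minus the resonance set. By construction it satisfies $(-\Delta_x + q - \lambda^2) G(\cdot,y,\lambda) = \delta_y$ in $\Omega$ and inherits the boundary condition from the domain of $H$, which accommodates the split $\Gamma = \Gamma_1 \cup \Gamma_2$ used by Shenk--Thoe. Off the diagonal the equation is homogeneous, so Schauder theory for Hölder $q$ yields $\mathcal{C}^{2+\alpha}$ regularity in $\Omega$ and $\mathcal{C}^1$ regularity up to $\Gamma$. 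Outgoingness is built into the limiting-absorption definition of $R(\lambda)$ and is preserved by the extension~(\ref{2.5}). The integral representation then follows from Green's second identity applied to $u$ and $G(\cdot,y,\lambda)$: the volume term extracts $u(y)$ from the delta singularity, while the boundary integral collapses to the stated form because $G$ satisfies homogeneous conditions on $\Gamma_1$ and $\Gamma_2$.

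For part (ii), I would transfer the operator-theoretic meromorphy from~(\ref{2.5}) to the kernel. Away from $\sigma(H) \cup \{\text{resonances}\}$, $\lambda \mapsto R(\lambda)$ is holomorphic from $\mathcal{L}^2_{\rm comp}$ into $\mathcal{H}^2_{\rm loc}$; localising in $y$ and applying interior and boundary Schauder estimates, uniform on compact subsets of $\mathbb{F} \setminus \{\text{resonances}\}$, promotes this to continuity of $\lambda \mapsto G(\lambda,\cdot,y)$ into $\mathcal{C}^1(\Omega \cup \Gamma) \cap \mathcal{C}^{2+\alpha}(\Omega)$. At each resonance $\lambda_0$, the finite-rank Laurent tail of $R(\lambda)$ descends to the kernel, yielding meromorphy on $\mathbb{F}$.

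The main obstacle I anticipate is verifying that the outgoing branch of $G$ is preserved under the black-box continuation~(\ref{2.5}) and that the Schauder estimates run uniformly in $\lambda$ on compacta that avoid resonances. Both points are handled by reconciling the abstract factorisation~(\ref{2.5}) with the explicit Shenk--Thoe parametrix, which already encodes the outgoing condition and supplies the local uniformity needed for the bootstrap.
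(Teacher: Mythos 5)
The paper does not actually prove Lemma~2.3; it is introduced with the sentence ``\ldots which is well-known in the setting as Shenk and Thoe [Theorem 5.1] and Thoe'', so the lemma is imported verbatim as a prior result rather than derived. Your proposal is therefore an independent proof sketch, and the route you take --- realise $G$ as the Schwartz kernel of the black-box continuation of $R(\lambda)$ from~(\ref{2.5}), use Schauder estimates to upgrade $\mathcal{H}^2_{\rm loc}$ regularity to $\mathcal{C}^{2+\alpha}(\Omega)\cap\mathcal{C}^1(\Omega\cup\Gamma)$, transfer the operator-level holomorphy/meromorphy to the kernel, and obtain the representation from Green's second identity --- is coherent and is in fact a different (more operator-theoretic) route than Shenk--Thoe's original construction, which builds $G$ directly via integral equations and Fredholm theory in the exterior domain and only afterward identifies it with the resolvent kernel.

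One step in your sketch is misstated, however. You claim ``the boundary integral collapses to the stated form because $G$ satisfies homogeneous conditions on $\Gamma_1$ and $\Gamma_2$.'' It does not collapse: the displayed identity in part~(i) is the \emph{full} Green representation, and the $\Gamma$-integral is exactly what Green's second identity produces after you insert $(-\Delta_x+q-\lambda^2)G(\cdot,y,\lambda)=\delta_y$. The boundary conditions on $G$ play no role in deriving that formula; they would be used only if one additionally assumed $\gamma u=0$ on $\Gamma$, which the lemma does not. What actually needs justification --- and what your sketch skips --- is the far-field boundary term: since $\Omega$ is unbounded, Green's identity must be applied on $\Omega\cap B_R$ and one must show the integral over $\partial B_R$ vanishes as $R\to\infty$; this is where the outgoing (radiation) conditions on \emph{both} $u$ and $G$ enter, and where the meromorphic continuation to $\Im\lambda<0$ requires care because the kernel grows exponentially. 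You should also flag that the Robin/Dirichlet split $\Gamma=\Gamma_1\cup\Gamma_2$ of the lemma is strictly more general than the pure Dirichlet-or-Neumann boundary operator in~(\ref{1.3}), so ``inherits the boundary condition from the domain of $H$'' needs a mixed-boundary-condition version of $H$ that the paper never sets up.
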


From~(\ref{2.5}), we can define a scattering amplitude for black
box formalism.
\begin{equation}
A(\lambda):=C_n\lambda^{n-2}
\mathbb{E}^{\phi_1}(-\lambda)[\Delta_0,\chi_2]R(\lambda)[\Delta_0,\chi]^t\mathbb{E}^{\phi_2}(\lambda),\label{2.6}
\end{equation}
where
$$
\mathbb{E}(\lambda):\mathcal{L}^2(\mathbb{S}^{n-1})\rightarrow\mathcal{C}^\infty(\mathbb{R}^n);u(\omega)\mapsto
C_n\lambda^{\frac{n-1}{2}}\int_{\mathbb{S}^{n-1}}u(\omega)e^{i\lambda
x\cdot\omega}d\omega,
$$
where $C_n$ is a constant depending on $n$. $A(\lambda)$ actually
comes from the radiation pattern of
$$R(\lambda)(-[\Delta_0,\chi]e^{i\langle
\bullet,\omega\rangle}).$$

\par
Scattering theory happens mostly on the continuous spectrum. The
scattering behavior of the solution is represented by the spectral
integration over the continuous spectrum. It is
standard in spectral analysis that
\begin{eqnarray}\nonumber
U_1(t)&=&\int_{\mathbb{R}^+}e^{-i\lambda
t}\{R(\lambda)-R_0(\lambda)-R(-\lambda)+R_0(-\lambda)\}d\lambda^2\\
&&+\Pi(0)+2\sum_{\Im\lambda_j>0}\Pi(\lambda_j).\label{116}
\end{eqnarray}
The first term on the right hand side of~(\ref{1.16}) comes from
the spectral measure
$$dE_\lambda^2:=\{R(\lambda)-R_0(\lambda)-R(-\lambda)+R_0(-\lambda)\}d\lambda^2$$
integrating along the continuous spectrum, the second one from the
possible embedded eigenvalue on the continuous spectrum and the
third one is from eigenvalues. Each discrete eigenspace are finite
dimensional. Hence, there exists a canonical kernel
$\Pi(\lambda_j,x,y)$ for each $\lambda_j$. The kernel of resolvent
is understood as the Green's function described in Lemma~\ref{2.3}
extended as a $\mathcal{L}^2$-kernel.

\par
In more general setting,
$R(\lambda)-R_0(\lambda)\in\mathcal{D}'(\Omega\times\Omega)$ by Schwartz kernel theorem. Another mathematical treatment on this
Schwartz kernel of $R(\lambda)$
is to see it as a Green's operator in sense of Ivrii
\cite[(1.3)]{Ivrii} which is an oscillatory integral.
Hence, the following identity holds in
$\mathcal{D}'(\Omega\times\Omega)$,
\begin{eqnarray}\nonumber
U_1(t,x,y)&=&\int_{\mathbb{R}^+}e^{-i\lambda
t}\{R(\lambda,x,y)-R_0(\lambda,x,y)-R(-\lambda,x,y)+R_0(-\lambda,x,y)\}d\lambda^2\\
&&+\Pi(0,x,y)+2\sum_{\Im\lambda_j>0}\Pi(\lambda_j,x,y).\label{1.16}
\end{eqnarray}
$U_1(t)$ also has a distributional trace. See Zworski \cite{Zworski2}.

\par
To continue our scattering theory, we define
\begin{equation}
s(\lambda):=\det S(\lambda)
\end{equation}where $S(\lambda)$ is the relative scattering
matrix in sense of Zworski \cite{Zworski2,Zworski3}. Let us define
$\sigma(\lambda):=\frac{i}{2\pi}\log s(\lambda)$. By functional
analysis,
\begin{equation}
\sigma'(\lambda)=\frac{i}{2\pi}\frac{s'(\lambda)}{s(\lambda)}.\label{1.23}
\end{equation}
We have a Weierstrass product form for $s(\lambda)$. Let
$m_\mu(R)$ be the multiplicity of $R(\lambda)$ near $\lambda=\mu$.
We have
\begin{equation}
P(\lambda):=\prod_{\{\mu:\rm{
resonances}\}}E(\frac{\lambda}{\mu},[m])^{m_\mu(R)},
\end{equation}
where
\begin{equation}
E(z,p):=(1-z)\exp(1+\cdots+\frac{z^p}{p}).
\end{equation}
Most important of all,
\begin{equation}
s(\lambda)=e^{g(\lambda)}\frac{P(-\lambda)}{P(\lambda)}.\label{1.26}
\end{equation}
The scattering determinant grows outside its resonances/poles like
\begin{equation}|s(\lambda)|\leq Ce^{C|\lambda|^n}, \hspace{2pt}C \mbox{
constants}.\label{1.12}
\end{equation} See Vodev
\cite{Vodev} for a black box formalism.  $g(\lambda)$ is a
polynomial of order at most $n$. See Vodev \cite{Vodev}. In particular,
\begin{equation}\label{1.28}
\sigma'(\lambda)=\frac{i}{2\pi}g'(\lambda)
+\frac{i}{2\pi}\sum_j\frac{1}{\lambda+\lambda_j}-\frac{1}{\lambda-\lambda_j}
+Q_\lambda(\lambda_j)-Q_\lambda(-\lambda_j),
\end{equation}
where
\begin{equation}
Q_\lambda(\lambda_j):=(\frac{1}{-\lambda_j})(1+(\frac{\lambda}{-\lambda_j})
+\cdots+(\frac{\lambda}{-\lambda_j})^{[m]-1}),
\end{equation}
which is also a polynomial in $\lambda$ provided
$\{\lambda_j\}\neq0$.
Furthermore, for $\lambda\in 0i+\mathbb{R}$,
\begin{equation}\label{1.29}
2\lambda {\rm
Tr}\{R(\lambda)-R_0(\lambda)-R(-\lambda)+R_0(-\lambda)\}=\sigma'(\lambda).
\end{equation}
This is the proof of Birman-Krein theorem in black box formalism
setting. See \cite{Zworski2,Zworski3}. In this case,
\begin{eqnarray}
\int_\mathbb{R}e^{i\lambda t} {\rm
Tr}U_1(t)dt=\sigma'(\lambda)+m_0(\lambda)\delta(\lambda^2)
+2\sum_{\Im\lambda_j>0}m_{\lambda_j}(R)\delta_0(\lambda^2-\lambda_j^2).\label{1.10}
\end{eqnarray}
Furthermore, when $n\geq3$, $0$ is neither a resonance nor an
eigenvalue of $R(\lambda)$. To see this, we use the asymptotic
behavior of $\sigma'(\lambda)$ near zero. From~(\ref{2.6}) by Zworski's theory \cite{Zworski3}, we can
show
\begin{equation}\label{1.31}
\sigma'(\lambda)=\lambda^{n-3} f(\lambda)\mbox{ as
}\lambda\rightarrow 0^+,\mbox{ where }f\mbox{ is smooth near
}\lambda=0.
\end{equation}
In addition, the self-adjoint operator $H$ has no $L^2$-eigenvalue
by standard argument in spectral analysis. See, \cite{Reed and
Simon}. Therefore,~(\ref{1.10}) becomes
\begin{equation}
\sigma'(\lambda)=\int_\mathbb{R}e^{i\lambda
t}{\rm Tr}U_1(t)dt.\label{2.24}
\end{equation}

\par
Using the theorem assumption, the kernel of $U_1(t,x,x)$ is
super-exponentially decaying for $|t|> T$, then~(\ref{2.24})
is in sense of Laplace transform in $\mathbb{C}$ and
\begin{equation}
\int_\mathbb{R}e^{i\lambda
t}{\rm Tr}U_1(t)dt\mbox{ converges and is entire in }\mathbb{C}.
\end{equation}
Accordingly,
\begin{lemma}
Under theorem assumption~(\ref{1.13}), the inverse Laplace transform ${\rm Tr}U_1(t)$ is unique and
 the two-phased $\sigma'(\lambda)$ has no resonance in $\mathbb{C}$.
\end{lemma}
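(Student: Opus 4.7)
The plan is to combine the trace identity (\ref{2.24}) with the hypothesis (\ref{1.13}) and the Weierstrass decomposition (\ref{1.28}) to rule out resonances via a Paley--Wiener style argument.

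First I would pass from the pointwise bound on $U_1(t,x,x)$ to a bound on the distributional trace $\mathrm{Tr}\,U_1(t)=\int U_1(t,x,x)\,dx$. The key point is that the constant $C$ in (\ref{1.13}) is independent of $x$, so the pointwise super-exponential decay is uniform. By finite speed of propagation for $P(x,D_x,D_t)$, the difference kernel $U_1(t,x,y)=U(t,x,y)-U_0(t,x,y)$ is supported in the region where signals emitted from $y$ have had enough time to reach the obstacle $\mathcal{O}$ and return to $x$; in particular, for fixed $t$, the set of diagonal points $x$ with $U_1(t,x,x)\neq 0$ is contained in a ball whose radius grows only linearly in $|t|$. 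Combining this volume growth with (\ref{1.13}) gives, for each $N\in\mathbb{N}$ and $|t|>T$,
\begin{equation*}
|\mathrm{Tr}\,U_1(t)|\leq C'(1+|t|)^n e^{-N|t|}\leq C''e^{-(N-1)|t|},
\end{equation*}
so $\mathrm{Tr}\,U_1(t)$ itself decays faster than any exponential.

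Next I would feed this into the Birman--Krein identity (\ref{2.24}). Because $\mathrm{Tr}\,U_1(t)$ is super-exponentially decaying at infinity, the integral
\begin{equation*}
\int_{\mathbb{R}}e^{i\lambda t}\,\mathrm{Tr}\,U_1(t)\,dt
\end{equation*}
converges absolutely for every $\lambda\in\mathbb{C}$: for any fixed $\lambda$, pick $N>|\Im\lambda|+1$ in the decay estimate and the integrand is dominated by an integrable function. Standard Morera/differentiation-under-the-integral-sign arguments then show the integral defines an entire function of $\lambda$. By (\ref{2.24}) this entire function agrees with $\sigma'(\lambda)$ on the real axis, hence with the meromorphic extension of $\sigma'$ throughout $\mathbb{C}$ by the identity theorem.

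The final step is the comparison with (\ref{1.28}): the right-hand side exhibits simple poles at each $\pm\lambda_j$ with nonzero residues proportional to the multiplicities $m_{\lambda_j}(R)$, while the polynomial and the $Q_\lambda(\pm\lambda_j)$ terms are entire. Since $\sigma'(\lambda)$ is now known to be entire, no such simple poles can occur, so the set of resonances is empty. Uniqueness of $\mathrm{Tr}\,U_1(t)$ as inverse Fourier/Laplace transform of $\sigma'(\lambda)$ is then immediate from injectivity of the Fourier transform on the Schwartz-class-type space of functions with super-exponential decay.

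The main obstacle I anticipate is the passage from the diagonal pointwise bound to a bound on the distributional trace: one needs finite speed of propagation (as emphasised in the introduction) to control the effective support of $U_1(t,x,x)$ in $x$, since otherwise the integral defining $\mathrm{Tr}\,U_1(t)$ need not even be finite. Once this is in place the rest is a clean application of the Paley--Wiener principle combined with the explicit resonance expansion (\ref{1.28}).
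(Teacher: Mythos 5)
Your proposal is correct and takes essentially the same route as the paper: push the super-exponential decay of the trace through the Birman--Krein identity~(\ref{2.24}) to see that $\sigma'(\lambda)$ extends to an entire function, hence admits no resonances. The paper treats this as nearly self-evident after noting that the Laplace transform converges and is entire; you usefully spell out the two steps it elides --- the passage from the pointwise bound~(\ref{1.13}) on $U_1(t,x,x)$ to a bound on $\mathrm{Tr}\,U_1(t)$ via finite speed of propagation and polynomial volume growth, and the explicit comparison with the partial-fraction form~(\ref{1.28}) to convert entireness into absence of poles --- and both steps are sound.
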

We recall Zworski \cite[Theorem 4]{Zworski3} and the remark thereafter, for
any $\gamma>0$ and $k$,
\begin{equation}
|(\frac{\partial}{\partial
t})^k({\rm Tr}U_1(t)-\sum_{\Im\lambda\leq\gamma\log|\lambda|}m(\lambda)e^{i|t|\lambda})|\leq
C_kt^{-n+2-k},\hspace{2pt}t>t_k>\frac{n+k}{\gamma}.\label{decay}
\end{equation}
The right hand side decays polynomially when $n$ is even;
exponentially for odd $n$. Since there is no resonance, the
summation over all of the resonance is void. Consequently, letting $t\rightarrow
0$ which means $\gamma\rightarrow\infty$, we conclude that the
$\mathcal{C}^\infty$-singularity support is $\{0\}$. We can say
more.
\begin{proposition}\label{23}
Under the Theorem 1.1 assumption, the Fourier-Laplace transform
$\int_\mathbb{R}e^{i\lambda t}U_1(t,x,x)dt$ in~(\ref{2.24}) can be
extended as a Fourier-Laplace transform over $\mathbb{C}$ if $n$
is odd. In particular,~(\ref{2.24}) depends only on the short-time
behavior of $U_1(t,x,x)$:
\end{proposition}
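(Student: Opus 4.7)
The plan is to convert assumption~(\ref{1.13}) into genuine compact support of $U_1(\cdot,x,x)$ in $t$, and then apply the Paley--Wiener--Schwartz theorem.

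First, I would reuse the observation the author makes just below Theorem 1.1: because the constant $C$ in~(\ref{1.13}) is independent of $N\in\mathbb{N}$, sending $N\to\infty$ at each fixed pair $(t,x)$ with $|t|>T$ forces $U_1(t,x,x)=0$ there. Hence for every $x\in\mathbb{R}^n$ the distribution $t\mapsto U_1(t,x,x)$ is supported in the compact interval $[-T,T]$, with its only possible singularity at $t=0$ by propagation of singularities for the wave equation.

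Next, with compact support in hand, I would rewrite
\[
\int_\mathbb{R}e^{i\lambda t}U_1(t,x,x)\,dt=\int_{-T}^{T}e^{i\lambda t}U_1(t,x,x)\,dt,
\]
and interpret the right-hand side as the pairing of a compactly supported distribution against the entire function $\lambda\mapsto e^{i\lambda t}$. Paley--Wiener--Schwartz then yields immediately that the left-hand side extends to an entire function of $\lambda\in\mathbb{C}$ of exponential type at most $T$. Combined with the preceding lemma (no resonances of $\sigma'$), this upgrades the identity~(\ref{2.24}) to an entire identity on all of $\mathbb{C}$; the odd-dimensional hypothesis has already been absorbed upstream, where~(\ref{decay}) provides exponential rather than merely polynomial control of the remainder, so no further odd-$n$ input is required here.

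Finally, because the integration is literally confined to $[-T,T]$, the value of the entire extension at any $\lambda\in\mathbb{C}$ depends exclusively on $U_1(\cdot,x,x)|_{[-T,T]}$, which establishes the ``short-time'' claim. The main delicate point I expect is justifying the $N\to\infty$ truncation in the a priori distributional setting: since~(\ref{1.13}) is formulated as a pointwise bound on values, on $\{|t|>T\}$ one may legitimately treat $U_1(\cdot,x,x)$ as an honest function that vanishes identically, so its distributional singularities are automatically confined to $[-T,T]$ and the Paley--Wiener--Schwartz framework applies without modification. Everything else is routine.
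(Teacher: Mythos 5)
Your proof is structurally close to the paper's: both use the observation that sending $N\to\infty$ in~(\ref{1.13}) kills $U_1(t,x,x)$ for $|t|>T$ (the paper isolates this as $I_3\equiv 0$ in a three-way partition of unity), both apply Paley--Wiener(--Schwartz) to get the entire extension, and both need to know that the singular support of $t\mapsto U_1(t,x,x)$ is $\{0\}$ in order to split off a rapidly decreasing remainder and localize the essential contribution to $t=0$.

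The genuine gap is in the justification of that last fact. You attribute "only possible singularity at $t=0$" to propagation of singularities for the wave equation, but that principle alone does not confine the singularities to $t=0$: on the diagonal $x=y$ in an exterior domain, $U_1(t,x,x)$ typically has singularities at every $t$ equal to the length of a closed billiard trajectory through $x$, and those may land inside $(0,T]$. What actually removes them is the chain: assumption~(\ref{1.13}) $\Rightarrow$ preceding lemma (no resonances) $\Rightarrow$ the Poisson-formula estimate~(\ref{decay}) (Zworski's Theorem~4 for odd $n$), whose resonance sum is then empty, so letting $\gamma\to\infty$ shows the $\mathcal{C}^\infty$-singular support of the wave trace reduces to $\{0\}$. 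This is precisely what the paper invokes to show the middle piece $I_2$ (the cutoff to a compact annulus in $t$ away from both $0$ and $\infty$) is smooth and compactly supported, hence has rapidly decreasing Fourier transform. Without this step your argument only establishes that the transform is entire and depends on $U_1|_{[-T,T]}$; it does not yet deliver the stated decomposition into $\int e^{i\lambda t}U_1\rho_1\,dt$ (with $\rho_1$ supported near $t=0$) plus a rapidly decreasing term, which is what the proposition and the subsequent degree argument in Section~3 actually use.
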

\begin{equation}
\int_{-\infty}^{\infty}e^{i\lambda t}
U_1(t,x,x)dt=\int_{-\infty}^{\infty}e^{i\lambda t}
U_1(t,x,x)\rho_1(t)dt+\mbox{a rapidly decreasing function},
\end{equation}
for some cutoff function $\rho_1(t)$ supported at $t=0$.

\begin{proof}
We divide the Fourier transform on $U_1(t,x,y)$ into three time
intervals:
\begin{eqnarray}\nonumber
&&\langle\int_{-\infty}^\infty e^{i\lambda t}
U_1(t,x,x)dt,\varphi(\lambda)\rangle\\\nonumber
&\hspace{-4pt}:=\hspace{-4pt}&\langle\int_{-\infty}^\infty
e^{i\lambda t} U_1(t,x,x)\rho_1(t)dt+\int_{-\infty}^\infty
e^{i\lambda t} U_1(t,x,x)\rho_2(t)dt+\int_{-\infty}^\infty
e^{i\lambda
t} U_1(t,x,x)\rho_3(t)dt,\varphi(\lambda)\rangle\\
&\hspace{-4pt}:=\hspace{-4pt}&\langle
I_1+I_2+I_3,\varphi(\lambda)\rangle,\label{2.12}
\end{eqnarray}
where $\rho_i\in\mathcal{C}^\infty(\mathbb{R})$, $i=1,2,3$. Let
$\rho_1,\rho_3\in\mathcal{C}^\infty(\mathbb{R})$ be two positive
cutoff functions such that $\rho_1$ has small compact support at
$t=0$ and $\rho_3$ has support outside $(-T,T)$, for some $T>0$ as
given by~(\ref{1.13}). We also assume $\rho_1(t)$ is $1$ near
$t=0$ and $\rho_3(t)$ is $1$ near $t=\infty$. We take
$\rho_2(t)=1-\rho_1(t)-\rho_3(t)$ to be of compact support. We
take $(-T,T)\subset supp\hspace{1pt}(\rho_1+\rho_2)$. This is a
partition of unity.

Using Paley-Wiener's theorem for $I_1$,
\begin{equation}
|\int_{-\infty}^\infty e^{i\lambda t} U_1(t,x,x)\rho_1(t)dt|\leq
C(1+|\lambda|)^Ne^{h(\Im \lambda)},\label{2.30}
\end{equation}for some $N\in\mathbb{N}$ and for some constant $C$. $h$ is the support
function of $ U_1(t,x,x)\rho_1(t)$. We just keep $I_1$. $N$ will
be specified by Ivrii's result \cite{Ivrii,Ivrii2}. $I_1$ is
holomorphic in $\mathbb{C}$.
\par
We apply Paley-Wiener's theorem to $I_2$. By~(\ref{decay}), $
U_1(t,x,x)\rho_2(t)$ is a smooth function with compact support.
By construction $\rho_2(t)$ is the union of two cutoff functions.
In this case,
\begin{equation}
|I_2(\lambda)|\leq C_N(1+|\lambda|)^{-N},\forall
N\in\mathbb{N},\mbox{ whenever }\lambda\in0i+\mathbb{R}.
\end{equation}
This is a rapidly decreasing term.

\par
For $I_3$, we use the theorem assumption by letting
$N\rightarrow\infty$ in~(\ref{1.13}). We obtain
\begin{eqnarray}\nonumber
&&|I_3(\lambda)|\equiv0.
\end{eqnarray}
Therefore, the oscillatory integral
$$\int_\mathbb{R}e^{i\lambda t} U_1(t,x,y)dt=I_1(\lambda),$$
mod a rapidly decreasing term. Q.E.D.
\end{proof}
For such a short-time wave trace, we can embed the support of the
influenced set into to torus. This is finite speed of propagation.


\section{A proof on Theorem 1.1} We now prove Theorem 1.1. We
start with
\begin{eqnarray}
I_1(\lambda):=\int_{-\infty}^\infty e^{i\lambda
t}U_1(t,x,x)\rho_1(t)dt.
\end{eqnarray}
From~(\ref{2.30}), we have
\begin{equation}
|I_1(\lambda)|\leq C(1+|\lambda|)^Ne^{h|\Im \lambda|},
\end{equation}for some $N\in\mathbb{N}$ and for some constant $C$.
$h$ is the support function of $ U_1(t,x,x)\rho_1(t)$. That means
 we may choose ${\rm supp}(\rho_1(t))$ small such that
\begin{equation}
|I_1(\lambda)|\leq Ce^{\delta| \lambda|},\mbox{for some constant
}C  \mbox{ for any }\delta>0.\label{3.3}
\end{equation}
$I_1(\lambda)$ is entire function in of order $1$ of minimal type.
Moreover, we look at the local behavior of the wave trace.
According to Ivrii \cite{Ivrii2}, as
$\lambda\rightarrow0i\pm\infty$,
\begin{eqnarray}
I_1(\lambda)=\int_{-\infty}^\infty e^{i\lambda
t}U_1(t,x,x)\rho_1(t)dt\sim
a_1(x)|\lambda|^{n-2}+a_{2}(x)|\lambda|^{n-3}+\cdots,\label{3.4}
\end{eqnarray}
where $\omega_{n-1}$ is $n-1$-sphere volume and $\int_\Gamma
a_1(x)dS_x=\pm\frac{1}{4}(2\pi)^{-n+1}\omega_{n-1}{\rm
volume}(\Gamma)$. The sign depends on the boundary conditions. We
refer to \cite[Theorem 2.1]{Ivrii2} and the remark thereafter for
the Fourier transform of the short-time wave trace. We also
disregard the rapidly decreasing term from Proposition~\ref{23}.
We need a Phragem\'{e}n-Lindel\"{o}f type of lemma.
\begin{lemma}
Let $f(z)$, $z=x+iy$, be an entire function of exponential type
$\sigma$ such that $|f(x)|\leq C(1+|x|^d)$, $d\in\mathbb{N}$, then
$f(z)e^{-\sigma|y|}\leq C_1(1+|z|^d)$. In particular, when
$\sigma=0$, then $f(z)$ is a polynomial of degree no greater than
$d$.\end{lemma}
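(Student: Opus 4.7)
The plan is to reduce this Phragm\'{e}n-Lindel\"{o}f-type bound to the standard half-plane version for functions of exponential type, by dividing out the polynomial growth on the real axis with an auxiliary rational factor. Working first in the closed upper half-plane $\{\Im z\geq0\}$, I would set
\[ g(z):=\frac{f(z)}{(z+i)^d}. \]
Since $(z+i)^d$ is a polynomial vanishing only at $z=-i$, which lies in the lower half-plane, $g$ is holomorphic in $\{\Im z>0\}$ and continuous up to $\mathbb{R}$. On the real line we have $|x+i|^d=(1+x^2)^{d/2}$, so the hypothesis $|f(x)|\leq C(1+|x|^d)$ gives the uniform bound
\[ |g(x)|\leq\frac{C(1+|x|^d)}{(1+x^2)^{d/2}}\leq C'. \]
Moreover, dividing by a polynomial does not alter the exponential type, so $g$ remains of exponential type $\sigma$ in the upper half-plane.

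Next I would invoke the classical Phragm\'{e}n-Lindel\"{o}f principle for exponential type functions in a half-plane: any function holomorphic in $\{\Im z\geq0\}$, of exponential type $\sigma$ there, and bounded by $C'$ on $\mathbb{R}$ must satisfy $|g(z)|\leq C'e^{\sigma\Im z}$ throughout the closed upper half-plane. Multiplying back by $|z+i|^d$, which is dominated by $C_0(1+|z|^d)$, yields the desired bound $|f(z)|\leq C_1(1+|z|^d)e^{\sigma|y|}$ for $y\geq0$. The symmetric argument using $f(z)/(z-i)^d$ covers the lower half-plane, and together these establish the first assertion.

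For the special case $\sigma=0$, the bound collapses to $|f(z)|\leq C_1(1+|z|^d)$ on all of $\mathbb{C}$. I would then apply Cauchy's estimates on circles of radius $R$ centered at the origin: the Taylor coefficients $a_k$ of $f$ at $0$ satisfy
\[ |a_k|\leq\frac{C_1(1+R^d)}{R^k}, \]
which tends to $0$ as $R\to\infty$ for every $k>d$. Hence $a_k=0$ for all $k>d$, so $f$ is a polynomial of degree at most $d$.

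The main obstacle is really bookkeeping: one must confirm that the quotient $g$ genuinely satisfies the hypotheses of the half-plane Phragm\'{e}n-Lindel\"{o}f theorem, in particular that division by $(z+i)^d$ preserves both the exponential type and the continuity up to $\mathbb{R}$. Since $d\in\mathbb{N}$, the factor $(z\pm i)^d$ is a single-valued polynomial with no branch issues, and this step reduces to routine complex analysis once the classical half-plane result is in hand.
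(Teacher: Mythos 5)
The paper does not prove this lemma at all; its ``proof'' is a one-line citation to Levin's book \cite[p.39]{Levin}. Your argument supplies what the citation points to, and it is correct. The reduction $g(z)=f(z)/(z+i)^{d}$ (resp.\ $(z-i)^{d}$ in the lower half-plane) removes the polynomial growth on $\mathbb{R}$ at the cost of a pole at $\mp i$, harmlessly outside the relevant half-plane; the quotient is continuous up to $\mathbb{R}$, bounded there, and still of exponential type $\leq\sigma$ in that half-plane since the polynomial factor is subexponential. The half-plane Phragm\'{e}n--Lindel\"{o}f theorem for exponential-type functions (Levin Ch.~I, or Boas) then gives $|g(z)|\leq C'e^{\sigma\Im z}$, provided one checks the indicator $h_{g}(\pi/2)\leq\sigma$; this holds because $h_{g}(\pi/2)=h_{f}(\pi/2)\leq\sigma$, the polynomial factor contributing nothing to the $\limsup$, and $h_{f}(\pi/2)\leq\sigma$ because $f$ is entire of exponential type $\sigma$. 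Multiplying back by $|z\pm i|^{d}\leq C_{0}(1+|z|^{d})$ gives the claimed bound, and the $\sigma=0$ case via Cauchy estimates is routine. This is essentially the proof one finds in Levin's book, so in substance you have reproduced the argument the paper is citing rather than taken a different route; the only caveat is that your statement of the half-plane Phragm\'{e}n--Lindel\"{o}f principle should make explicit that it is the indicator $h_{g}(\pi/2)$ along the ray $i\mathbb{R}^{+}$, not merely the exponential type in the half-plane, that controls the conclusion.
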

\begin{proof}
This is stated in B. Ya. Levin's book \cite[p.39]{Levin}. $\Box$
\end{proof}
Hence,
\begin{lemma}
$I_1(\lambda)$ is a polynomial of degree no greater than $n-2$.
\end{lemma}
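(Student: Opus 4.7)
The plan is to combine the two ingredients already assembled just before the lemma statement: the growth bound (3.3) showing $I_1$ is entire of exponential type zero, and Ivrii's real-axis asymptotic expansion (3.4) which forces polynomial growth of degree $n-2$ along $\mathbb{R}$. Feeding these into the Phragmén--Lindelöf lemma quoted from Levin gives the conclusion immediately.

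Concretely, I would proceed as follows. First, I would record that by (3.3), for every $\delta>0$ there is $C_\delta$ with $|I_1(\lambda)|\le C_\delta e^{\delta|\lambda|}$ on all of $\mathbb{C}$. Hence $I_1$ is entire of exponential type $\sigma=0$ (minimal type, order one). Second, I would use Ivrii's asymptotic (3.4): as $\lambda\to\pm\infty$ along the real axis,
\begin{equation*}
I_1(\lambda)=a_1(x)|\lambda|^{n-2}+a_2(x)|\lambda|^{n-3}+\cdots,
\end{equation*}
so in particular $|I_1(\lambda)|\le C|\lambda|^{n-2}$ for $|\lambda|\ge \lambda_0$ and some constant $C=C(x)$. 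Since $I_1$ is entire, it is bounded on the compact segment $[-\lambda_0,\lambda_0]$, hence $|I_1(\lambda)|\le C'(1+|\lambda|^{n-2})$ for every real $\lambda$, with $d=n-2$.

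Third, I would apply the Phragmén--Lindelöf lemma just proved, with $\sigma=0$ and $d=n-2$. The $\sigma=0$ clause directly yields that $I_1(\lambda)$ is a polynomial in $\lambda$ of degree at most $n-2$, which is the desired statement.

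I do not expect a real obstacle here: the work has already been done, both in producing the minimal-type bound (via the smallness of $\operatorname{supp}\rho_1$ in (3.3)) and in producing the polynomial real-axis bound (via Ivrii's local wave-trace expansion). The only care needed is to verify that Ivrii's expansion (3.4) is valid simultaneously as $\lambda\to+\infty$ and $\lambda\to-\infty$ so that we can assert a polynomial bound on the whole real line; this is fine because $I_1$ is the Fourier transform of a real distribution times the even cutoff $\rho_1$, and Ivrii's theorem \cite{Ivrii2} is stated for $\lambda\to\pm\infty$. After that, Levin's Phragmén--Lindelöf statement does all the remaining work in one line.
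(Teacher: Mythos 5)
Your argument is exactly the one the paper intends: combine the minimal-type bound (3.3) obtained by shrinking $\operatorname{supp}\rho_1$, the real-axis polynomial growth of order $n-2$ from Ivrii's expansion (3.4), and the Phragm\'en--Lindel\"of lemma quoted from Levin with $\sigma=0$, $d=n-2$. Nothing is missing and the route is the same as the paper's.
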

In this case,
\begin{eqnarray}
\partial^{n-1}_\lambda I_1(\lambda)=\int_{-\infty}^\infty e^{i\lambda t}(it)^{n-1}U_1(t,x,x)\rho_1(t)dt=0,\forall\lambda\in\mathbb{C}.
\end{eqnarray}
Moreover, Fourier inversion formula tells us
\begin{equation}
t^{n-1}U_1(t,x,x)\equiv0\mbox{ in
}\mathcal{D}'((-\delta,\delta)),\forall\delta>0.\label{3.6}
\end{equation}
Since $n$ is odd,~(\ref{3.4}) and~(\ref{3.6}) implies
\begin{equation}
t^{n-1}\{\frac{a_1(x)}{t^{n-1}}+a_2(x)\delta^{(n-3)}(t)+\cdots+\mbox{
constant term}+\cdots\}\equiv0\mbox{ in
}\mathcal{D}'((-\delta,\delta)).
\end{equation}
By distribution theory at $t=0$, $$a_1(x)=0,\forall x\in\Omega.$$
In particular, $\int_\Gamma\zeta(x)a_1(x)dS_x=0$, $\forall
\zeta\in\mathcal{C}^\infty_0(\mathbb{R}^n_x;[0,1])$. In this case,
$\Gamma=\phi$. Theorem 1.1 is proved.

\end{document}